\theoremstyle{plain} 
\newtheorem{thm}{Theorem}
\newtheorem*{mainthm}{Main theorem}
\newtheorem{lemma}[thm]{Lemma}
\newtheorem{coro}[thm]{Corollary}
\theoremstyle{definition}
\newtheorem{remark}[thm]{Remark}
\newtheorem{definition}[thm]{Definition}
\newcommand{\Z}{\mathbb{Z}}
\newcommand{\N}{\mathbb{N}}
\newcommand{\SL}{\mathrm{SL}}
\newcommand{\GL}{\mathrm{GL}}
\newcommand{\Aut}{\mathrm{Aut}}
\newcommand{\Alt}{\mathrm{Alt}}
\newcommand\operad{{\mathcal O}}
\newcommand{\Id}{\mathsf{Id}}
\newcommand{\K}{K}
\newcommand{\eval}{\mathsf{ev}}
\newcommand{\ar}[1]{{\mathsf{ar}}(#1)}
\newcommand{\question}[1]{{\textcolor{red}{#1}}}
\newcommand{\mk}[1]{\textcolor{red}{MK: #1}}
\begin{document}
\title{Groups with property (T) and  many alternating group quotients}

\author[L.~Bartholdi]{Laurent Bartholdi}
\address{Saarland University, Germany}
\thanks{L.B. is supported in part by the ERC AdG grant 101097307}

\author[M.~Kassabov]{Martin Kassabov}
\address{Cornell University, USA}
\thanks{M.K. is supported in part by the Simon's Foundation grant 713557}

\title{Property (T) and Many Quotients}
\date{21 August 2023}
\begin{abstract}
    We prove that, for the free algebra over a sufficiently rich operad $\mathcal O$, a large subgroup of its group of tame automorphisms has Kazhdan's property (T).
    
    We deduce that there exists a group with property (T) that maps onto large powers of alternating groups.
\end{abstract}
\maketitle

\section{Introduction}
Property (T), introduced by Kazhdan in~\cite{kazhdan:T}, may be thought of a strong, analytic form of finite generation --- it remains the most direct path to proving that lattices in higher rank Lie groups are finitely generated. In this spirit, one should expect groups with this property to have tight restrictions on their quotients. In particular, the Cayley graphs of their quotients form so-called \emph{expander graphs}, characterized for example by a spectral gap in their combinatorial Laplacian. Conversely, families of expander graphs are conveniently ``explained'' by their being Cayley graphs of quotients of a single group with property (T). The performance of the ``product replacement algorithm'' (producing almost uniform samplings of black box groups) is thus explained by property (T) of automorphism groups of free groups~\cite{kno:autf5}.

It remained for long an open question whether there exists a group with property (T) that admits all (or at least infinitely many) alternating groups as quotients; see e.g.~\cite{lubotzky:discrete}*{Problems~10.3.2--10.3.4}. A hint that this might be possible appears in~\cite{kassabov:alt}, proving that alternating groups admit generating sets turning them into expander graphs. The question was settled in~\cite{kno:autf5}: Kaluba, Nowak and Ozawa prove that $\Aut(F_5)$ has property (T), while Gilman had previously shown that $\Aut(F_5)$ maps onto infinitely many alternating groups. Another example of (T) group mapping onto infinitely many alternating groups appears in~\cite{caprace-kassabov:t}, while a $(\tau)$ group (see below) mapping onto all alternating groups appears earlier in~\cite{ershov-jaikin-kassabov:t}*{Theorem~9.17}.

In a completely different direction, Philip Hall proved that, for every finite simple group $G$, the minimal number of generators of the direct power $G^n$ grows logarithmically in $n$, and may remain bounded for unbounded exponents. In fact, for every $k\ge2$ there exists a superexponential function $f_k(n)$ such that $\Alt(n)^{\times f_k(n)}$ is $k$-generated. For instance, for $\Alt(5)$ the alternating group on $5$ letters, $\Alt(5)^{\times 19}$ is $2$-generated while $\Alt(5)^{\times 20}$ requires $3$ generators; see~\cite{hall:eulerian}*{\S1.6}.

It is thus theoretically possible that there exists a group with property (T) and mapping onto large powers of alternating groups. We achieve a result of this kind in this article, with a superpolynomial exponent:
\begin{mainthm}[= Corollary~\ref{coro:main}]
    For every $d\ge2$ there is a group $\Gamma_d$ with property (T) that surjects onto $\Alt(p^{(d+2)k} -1)^{\times p^{k^{d+1}} }$ for all primes $p > 3+d+4\sqrt{d-1}$ and all $k \geq 1$.
\end{mainthm}

This implies that for every $d$ there is a group with property (T) that surjects onto $\Alt(n)^{\times f(n)}$ for infinitely many $n$, with $f(n) \approx \exp (\log n)^d$. We leave as an open question whether there exists a group with property (T) that surjects onto $\Alt(n)^{\times f(n)}$ for infinitely many $n$, with $f(n) \approx \exp (n)$, or at least $f(n)\succsim \exp(n^\alpha)$ for some $\alpha > 0$.

Prior to this work, it could be derived, using the techniques from~\cite{kassabov:alt} and some extra work, that the Cayley graphs of $\Alt(n)^{\times n^k}$ may form a family of expander graphs of bounded degree, for arbitrary $k\in\N$; and using~\cite{ershov-jaikin-kassabov:t} that there exists a group with property ($\tau$) mapping onto all $\Alt(n)^{\times n^k}$.

\subsection{Property (T)}
We recall only very briefly the definition of property (T), for details see e.g.~\cite{bekka-harpe-valette:t}. A discrete group $\Gamma$ has \emph{property (T)} if its trivial representation is isolated within unitary representations; this means the following. A representation of $\Gamma$ on a Hilbert space $\mathscr H$ \emph{almost has invariant vectors} if for every $\epsilon>0$ and every finite $S\subseteq\Gamma$ there is $v\in\mathscr H$ with $\|s v-v\|<\epsilon\|v\|$ for all $s\in S$. The group $\Gamma$ has property (T) if every unitary representation of $\Gamma$ that almost has invariant vectors actually has $\Gamma$-invariant vectors.

We shall also make use of a relative version of (T): the pair $(\Gamma,H)$ with $H\le\Gamma$ has \emph{relative property (T)} if every unitary $\Gamma$-representation that almost has invariant vectors has $H$-invariant vectors. Thus, $\Gamma$ has property (T) if and only if $(\Gamma,\Gamma)$ has relative property (T), and if $H$ is finite then $(\Gamma,H)$ always has relative property (T). This relative property appeared often as a stepping stone in the proof that a group has property (T), and this article is no exception. Briefly, if $(\Gamma,H_i)$ have property (T) for a collection of subgroups $(H_i)$, the invariant subspaces of the $H_i$ have well-controlled, large enough angles, and $G$ is generated by $\bigcup_i H_i$, then $\Gamma$ has (T), see~\cite{kassabov:t}. We shall review more precisely the required condition in the course of the proofs.

Margulis realized in~\cite{margulis:expander} that property (T) leads to explicit constructions of expander graphs, namely the Cayley graphs of finite quotients. A weaker property, called \emph{property $(\tau)$} by Lubotzky, already yields this conclusion: it suffices that the trivial representation be isolated among those representations of $\Gamma$ that factor through a finite quotient of $\Gamma$.

\subsection{Acknowledgments}
The second author is grateful to the organizers, and participants of the workshop on profinite rigidity at ICMAT in June 2023 for the stimulating discussions.

\section{Universal algebra}
Let $\operad$ be an \emph{operad}: a collection $(\operad(n))_{n\in\N}$ of abstract operations, with compositions $\circ_i\colon\operad(m)\times\operad(n)\to\operad(m+n-1)$ for $1\le i\le n$, thought of a composing an arity-$m$ operation with an arity-$m$ operation by feeding the output of the former as $i$th input of the latter. These composition maps obey the obvious associativity law. There is also an action of the symmetric group on $\operad(n)$, permuting each operation's inputs, and compatible with the compositions.

The operad $\operad$ is \emph{generated} by the set $S$ if each operation in $\operad(n)$ can be obtained as a composition of operations in $S$. For simplicity we will not allow operations of arity zero (i.e., constants) in our operads; however we do not put any other restrictions, so for example operations of arity $1$ (namely maps) are allowed.

We shall not need much from the theory of operads, so we concentrate immediately on a special case that serves our purposes: the \emph{free operad} on a finite graded set $S$, which can be defined by the usual universal property, and also has the following concrete description.
Let $S$ be a finite set of abstract operations $\{\star_s : s\in S\}$, each with its \emph{arity} $\ar{s}\in\N$. We denote by $\ar{S}=\max\{\ar{s}:s\in S\}$ the maximal arity of $S$. The \emph{free operad} $\operad_S$ on $S$ consists of all compositions of operations in $S$, with an ordering of their inputs. The elements of $\operad_S(n)$ are rooted trees with $n$ leaves numbered $1,\dots,n$, with at each non-leaf vertex a label $s\in S$ and $\ar{s}$ descendants in a given order.

\begin{definition}
    Let $R$ be a commutative ring, and let $\operad$ be an operad. An \emph{$\operad$-algebra over $R$} is an $R$-module $A$ endowed with a family of $R$-multilinear maps $A^n\to A$, one for each element of $\operad(n)$, satisfying the usual operad axioms.
    
    If $\operad$ is the free operad on $S$, this is equivalent to being given a family of $R$-multilinear maps $\star_s\colon A^{\ar{s}}\to A$, one for each $s\in S$.
\end{definition}

In the category of $\operad$-algebras over $R$ there is a free object on any set $X$ which we denote by $R\langle X \rangle_\operad$. As an $R$-module, it is generated by all rooted trees of height $1$ with leaves labeled by $X$ and an element in $\operad(n)$ labelling the root.

Note that $R\langle X \rangle_\operad$ has a natural grading in which all variables have degree $1$ and all operations have degree $0$. If $X$ is finitely generated and $\operad$ is finitely generated and does not contain any operations of arity $1$ then the homogeneous components of $R\langle X \rangle_\operad$ are finitely generated $R$-modules. In the case of a free operad $\operad_S$ the free algebra $R\langle X \rangle_{\operad_S}$ has homogeneous components of arbitrarily large degrees provided that $S$ contains at least one operation of arity $\geq 2$.

\section{Tame automorphisms}
Let $\operad$ be an operad generated by a finite set $S$ of operations as in the previous section. Consider a commutative ring $R$, and let $F=R\langle X\rangle_{\operad}$ denote the free $\operad$-algebra on $X=\{x_0,\dots,x_{n-1}\}$. We shall consider a certain subgroup $\Gamma_{n,N,\operad}$ of group of \emph{tame} automorphisms of $F$.

For $0\le i<n$ and $f\in R\langle x_0,\dots,\widehat{x_i},\dots,x_{n-1}\rangle_{\operad}$, consider the \emph{transvection} 
\[
    t_i(f)\colon F\to F,\quad t_i(f)(x_j)=\begin{cases}x_j+f & \text{ if }i=j,\\ x_j & \text{otherwise}.\end{cases}
\]
Evidently $t_i(f)$ is an automorphism of $F$, with inverse $t_i(-f)$. 
By definition, the group of \emph{tame automorphisms} of $F$ is the group generated by all such transvections.

\begin{definition}
    Let $S$ be a generating set of an operad $\operad$, and choose $n>\max\{\ar{S},2\}$. Let $N\in\N$ be any, and consider the ring $R=\Z[1/N]$. The group $\Gamma_{n,N,\operad}$ is defined as the subgroup of $\Aut(F)$ generated by\footnote{By its construction, the group depends not only on the operad $\operad$ but also on the choice of the generating set $S$. This dependence is very mild as we will show in Theorem~\ref{thm:transvections}, and is not reflected in the notation.}
    \begin{align*}
        \alpha_i &\coloneqq t_{i-1}(x_i)\text{ for }1\le i<n,\\
        \alpha_n &\coloneqq t_{n-1}(x_0/N),\\
        \beta_s &\coloneqq t_0(\star_s(x_1,\dots,x_{\ar{s}}))\text{ for }s\in S.
    \end{align*}
    For brevity we write it simply $\Gamma_n$ when the dependency on $N$ and $\operad$ is irrelevant.
\end{definition}

Our first main result is that the group $\Gamma_n$ has property (T) as soon as the parameter $N$ is large enough:
\begin{thm}\label{thm:t}
    If $N$ is divisible by all primes $p\le 3+\ar{S}+4\sqrt{\ar{S}-1}$, then $\Gamma_{n,N,\operad}$ has Kazhdan's property (T).
\end{thm}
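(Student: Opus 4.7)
The plan is to apply the Ershov--Jaikin--Kassabov recipe for assembling property (T) from relative-(T) pieces: exhibit a finite family of subgroups $H_1,\dots,H_k\le\Gamma_n$ such that each pair $(\Gamma_n,H_i)$ has relative property (T), verify that $\bigcup_i H_i$ generates $\Gamma_n$, and check the ``bounded angles'' spectral condition of~\cite{kassabov:t} which glues these relative properties into property (T) for $\Gamma_n$. The hypothesis on primes dividing $N$ should enter precisely at the quantitative spectral step.

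\textbf{Subgroups.} First I would analyse $A:=\langle\alpha_1,\dots,\alpha_n\rangle$. On the degree-$1$ submodule $F_1\cong R^n$ its generators act as the cyclic ``staircase'' of elementary transvections $E_{i-1,i}$ for $1\le i<n$ together with the twisted $(1/N)\,E_{n-1,0}$, and for $n\ge 3$ it is classical (Suslin/Carter--Keller) that this set generates $\SL_n(R)$. For each $d\ge 1$ let $V_d\le\Gamma_n$ denote the abelian subgroup of transvections $\{t_0(f):f\in F_d\}$. The defining generator $\beta_s$ lies in $V_{\ar{s}}$; composing operad operations through the $\beta_s$ and conjugating by $A$ generates all of $V_d$ for every $d$ up to some bounded degree, so $\Gamma_n$ is generated by $A$ together with finitely many $V_d$.

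\textbf{Relative (T) and assembly.} Each $V_d$ is an $R$-module on which $A$ acts through the $\SL_n(R)$-representation on $F_d$, and since $n>\ar{S}$ the module $F_d$ contains no trivial $\SL_n(R)$-summand. The Burger/Shalom/Kassabov relative property (T) for pairs $(\SL_n(R)\ltimes M,M)$ with $M$ a nontrivial $\SL_n(R)$-module then applies to the semidirect product $A\ltimes V_d$ sitting inside $\Gamma_n$, yielding relative (T) for $(\Gamma_n,V_d)$. Using the effective cyclic shift coming from $A$-conjugation, one obtains analogous relative-(T) transvection subgroups $V_d^{(i)}$ into every coordinate $x_i$. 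Finitely many of these pieces, together with $A$, generate $\Gamma_n$, so Kassabov's gluing criterion reduces the theorem to a uniform lower bound on the angles between their fixed subspaces in any unitary $\Gamma_n$-representation.

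\textbf{Main obstacle.} I expect the bulk of the work to lie in that final angle bound. It translates into a uniform spectral gap estimate for a family of random walks on the growing $\SL_n(R)$-modules $F_d$ built from iterated operad compositions. The arity $\ar{S}$ enters because $\operad_S$ has $\ar{S}$-fold branching, so the natural walks on $F_d$ have combinatorial spectral radius of order $2\sqrt{\ar{S}-1}$, matching the Alon--Boppana threshold. The very shape $p\le 3+\ar{S}+4\sqrt{\ar{S}-1}$ of the prime bound on $N$ strongly suggests that inverting exactly these small primes in $R=\Z[1/N]$ is what is needed to separate the walk's spectrum from the trivial representation by a constant uniform in $d$. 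Carrying out this spectral separation rigorously, and ensuring that it survives the interplay between operad composition and the characteristic-$p$ representation theory of $\SL_n(R)$, is where I expect the real technical effort to go.
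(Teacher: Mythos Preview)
Your high-level strategy---exhibit a finite generating family of subgroups with relative (T) and verify Kassabov's angle criterion---is correct and is exactly what the paper does. But your choice of subgroups and your diagnosis of the ``main obstacle'' are both off, and in a way that would make the argument much harder than it needs to be.

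The paper does \emph{not} use the large degree-$d$ transvection modules $V_d$. Instead it takes $n$ tiny abelian ``root subgroups''
\[
G_0=\langle t_0(r x_1),\,t_0(r\star_s(x_1,\dots,x_{\ar{s}})):r\in R,\,s\in S\rangle,\qquad
G_i=\{t_i(r x_{i+1\bmod n}):r\in R\}\ (1\le i<n),
\]
each just a copy of $R$ (or $R^{|S|+1}$). Relative (T) for $(\Gamma_n,G_i)$ is immediate from property (T) of $\SL_n(R)\le\Gamma_n$: every $G_i$ sits at bounded word length from the generators once you have all of $\SL_n(R)$. No Burger/Shalom module argument is needed.

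For the angle step, the key observation is that each $\langle G_i,G_j\rangle$ is either abelian or a Heisenberg group over $R$. The Ershov--Jaikin bound for Heisenberg over $\Z/p$ gives $\cos\sphericalangle(\mathscr H^{G_i},\mathscr H^{G_j})\le p^{-1/2}$, where $p$ is the smallest prime \emph{not} dividing $N$. Kassabov's criterion then reduces to positive definiteness of an explicit $n\times n$ matrix with $1$'s on the diagonal and $-p^{-1/2}$ in those off-diagonal positions $(i,j)$ where $\langle G_i,G_j\rangle$ is Heisenberg: adjacent pairs on the cycle, plus the first $\ar{S}$ entries of row/column $0$. An elementary eigenvalue estimate gives positive definiteness when $p^{-1/2}<1/(2+\sqrt{\ar{S}-1})$, i.e.\ $p>3+\ar{S}+4\sqrt{\ar{S}-1}$.

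So the $\sqrt{\ar{S}-1}$ is not an Alon--Boppana threshold for walks on operad trees; it is simply the top eigenvalue contribution of an $(\ar{S}-1)$-entry block in a small symmetric matrix. There is no ``uniform spectral gap over growing modules $F_d$'' to prove---that is where your plan goes astray. Your $V_d$ decomposition could perhaps be pushed through, but the pairwise angles between $\mathscr H^A$ and $\mathscr H^{V_d}$ (and between different $V_d^{(i)}$) are not obviously controlled, and you would be working much harder than necessary.
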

\begin{proof}
    Notice first that the automorphisms $\alpha_1,\dots,\alpha_n$ generate $\SL_n(R)$, and recall that $\SL_n(R)$ has property (T) since $n\ge3$.

    There exists therefore a constant $\delta$ such that, for any representation of $\Gamma_n$ on a Hilbert space $\mathscr H$, any $\epsilon>0$ and any vector $v\in\mathscr H$ which is $\epsilon$-almost invariant under the action of the generators of $\Gamma_n$, we have that $v$ is $\delta\epsilon$-almost invariant under $\SL_n(R)$, and in particular under all $t_i(r x_j)$ with $0\le i\neq j<n$ and $r\in R$, and also under all $t_0(r\star_s(x_1,\dots,x_{\ar{s}}))$ with $r\in R$ and $s\in S$ since these are words of bounded length in $\SL_n(R)$ and the generators of $\Gamma_n$.

    Consider the following abelian subgroups of $\Gamma_n$:
    \begin{align*}
        G_0 &= \langle t_0(r x_1),t_0(r\star_s(x_1,\dots,x_{\ar{s}})):r\in R,s\in S\rangle,\\
        G_i &= \{t_i(r x_{i+1\bmod n}) : r\in R\}\text{ for }1\le i<n.
    \end{align*}
    Then by the previous paragraph the pairs $(\Gamma_n,G_i)$ all have relative property (T).

    For all $i<j$, the group generated by $G_i$ and $G_j$ is either abelian or nilpotent of class $2$: if $0<i<j-1$ then it is abelian, isomorphic to $R^2$; if $0<i=j-1$ then it is isomorphic to the Heisenberg group of upper-triangular $3\times 3$ matrices over $R$; and if $i=0$ then it is isomorphic to a subgroup of a product, over $s\in S$, of either $R^2$ (if $\ar{S}<j<n-1$) or the Heisenberg group (if $j\le \ar{S}$ or $j=n-1$).

    It follows that, in a representation as above, the Friedrichs angles between invariant subspaces for $G_i,G_j$ satisfy
    \[
        0\le\cos\sphericalangle(\mathscr H^{G_i},\mathscr H^{G_j})\le\begin{cases}0 & \text{ if $\langle G_i,G_j\rangle$ is abelian},\\ p^{-1/2} & \text{ otherwise},\end{cases}
    \]
    where $p$ is the smallest prime not dividing $N$. (Recall that the angle between two subspaces $V,W\le\mathscr H$ is the smallest angle between vectors in $V\cap(V\cap W)^\perp$ and $W\cap(W\cap V)^\perp$). Indeed, it suffices to consider representations of the Heisenberg group over $\mathbb Z/p$, which have dimension $1$ or $\ge p$; and then the bound on the angles is~\cite{ershov-jaikin:universal}*{Theorem~4.4}.
    
    To apply~\cite{kassabov:t}*{Theorem~1.2}, it remains to prove that the following matrix is positive definite:
    \[
        \Delta\coloneqq
        \begin{pmatrix}
            1 & -\varepsilon & \cdots & -\varepsilon & 0 & \cdots & 0 & -\varepsilon\\
            -\varepsilon & 1 & \ddots & & & & & 0\\
            \vdots & \ddots & \ddots & -\varepsilon & & & & \vdots\\
            -\varepsilon & 0 & -\varepsilon & 1 & -\varepsilon\\
            0 & & & -\varepsilon & 1 & \ddots & & \vdots\\
            \vdots & & & & \ddots & \ddots & -\varepsilon & 0\\
            0 & & & & & -\varepsilon & 1 & -\varepsilon\\
            -\varepsilon & 0 & \cdots & & \cdots & 0 & -\varepsilon & 1
        \end{pmatrix}
    \]
    for $\varepsilon = p^{-1/2}$, with terms `$-\varepsilon$' appearing one step away from the diagonal and in the first $\ar{S}$ entries of the first row and column.

    We can decompose $\Delta$ as the sum of a circulant matrix $\Delta_1$ with $2\varepsilon$ on the diagonal and $-\varepsilon$ off the diagonal, and a matrix $\Delta_2$ with $1-2\varepsilon$ on the diagonal and $-\varepsilon$ on the first $\ar{S}-1$ entries of the first row and column. The matrix $\Delta_1$ is positive semidefinite as soon when  $\varepsilon>0$, while $\Delta_2$ has eigenvalues $1-2\varepsilon$ (with multiplicity $n-2$) and $1 - 2\varepsilon \pm \sqrt{\ar{S}-1} \varepsilon$ (with multiplicity $1$). Thus $\Delta_2$  is positive definite when $\sqrt{\ar{S}-1} \varepsilon < 1 - 2\varepsilon$. We deduce that $\Delta$ is positive definite when $\varepsilon <1/(2+\sqrt{\ar{S}-1})$.
\end{proof}
\begin{remark}
    The bound for $p$ in Theorem~\ref{thm:t} is not optimal, but it can not be improved significantly. It can be shown that $\Delta$ is not positive definite when $\varepsilon >1/\max(2,\sqrt{\ar{S}-1})$, so the best bound is at least $\max(4, \ar{S}  - 1)$.
\end{remark}
Our next result is that the group $\Gamma_n$ contains a substantial part of the tame automorphism group of $F$, and really depends on $\operad$ (as the notation suggests) and only mildly on the choice of its generating set $S$:
\begin{thm}\label{thm:transvections}
    For all $f\in R\langle x_1,\dots x_{n-1-\ar{S}}\rangle_\operad$, the group $\Gamma_{n,N,\operad}$ contains the transvection $t_0(f)$.
\end{thm}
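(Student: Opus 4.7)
The plan is strong induction on the number of $S$-operation nodes appearing in $f$; I will prove the equivalent symmetric statement that $t_0(f) \in \Gamma_n$ whenever $f \in R\langle x_1, \dots, x_{n-1}\rangle_\operad$ uses at most $n-1-\ar{S}$ of the variables. From the proof of Theorem~\ref{thm:t} I extract that $\SL_n(R) \subseteq \Gamma_n$. I will repeatedly use (a) the additive law $t_0(f)t_0(f') = t_0(f+f')$, valid whenever neither polynomial involves $x_0$; (b) conjugation by the block subgroup $\SL_{n-1}(R) \le \SL_n(R)$ fixing $x_0$, which gives $\sigma t_0(f) \sigma^{-1} = t_0(\sigma f)$ and lets me freely relabel or linearly recombine $x_1, \dots, x_{n-1}$; and (c) conjugation by a signed swap $\tau \in \SL_n(R)$ interchanging $x_0$ and $x_j$, which yields $\tau t_0(g) \tau^{-1} = t_j(g)$ whenever $g$ involves neither $x_0$ nor $x_j$.

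The base case ($f$ linear) is immediate from $\SL_n(R) \subseteq \Gamma_n$. For the inductive step, additivity reduces the task to $f = \star_s(g_1, \dots, g_k)$ with $k = \ar{s}$ and each $g_i \in R\langle x_1, \dots, x_{n-1}\rangle_\operad$ of strictly smaller operation-count (scalar coefficients of $f$ are absorbed into $g_1$ via multilinearity of $\star_s$). Since $f$ uses at most $n-1-\ar{S}$ variables and $k \le \ar{S}$, I apply (b) to assume every $g_i$ lies in $R\langle x_{k+1}, \dots, x_{n-1}\rangle_\operad$, freeing the slots $x_1, \dots, x_k$. By the inductive hypothesis $t_0(g_i) \in \Gamma_n$, and then (c) yields $t_j(g_i) \in \Gamma_n$ for every $j \in \{1, \dots, k\}$.

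The transvections $t_1(g_1), \dots, t_k(g_k)$ pairwise commute, so for any $J \subseteq \{1, \dots, k\}$ the product $\psi_J \coloneqq \prod_{j \in J} t_j(g_j)$ lies in $\Gamma_n$. Multilinearity of $\star_s$ gives
\[ \psi_J \beta_s \psi_J^{-1} = t_0\Bigl(\sum_{I \subseteq J} \star_s(\epsilon_I^1, \dots, \epsilon_I^k)\Bigr), \qquad \epsilon_I^i = \begin{cases} g_i & i \in I, \\ x_i & i \notin I, \end{cases} \]
and Möbius inversion on the Boolean lattice isolates the top term:
\[ t_0(\star_s(g_1, \dots, g_k)) = \prod_{J \subseteq \{1, \dots, k\}} \bigl(\psi_J \beta_s \psi_J^{-1}\bigr)^{(-1)^{k - |J|}} \in \Gamma_n. \]

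The main obstacle is ensuring that enough variables remain free at every recursion step to serve as the transvection slots $x_1, \dots, x_k$; this is exactly what the $\ar{S}$-variable gap in the hypothesis provides. The case $k = 1$ causes no problem, as the Möbius formula reduces to two terms and the operation count still strictly decreases.
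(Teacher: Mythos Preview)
Your proof is correct and follows essentially the same route as the paper's: induction reducing to a single term $\star_s(g_1,\dots,g_k)$, relabeling via $\SL_n(R)$ to free $\ar{s}$ variable slots, and then building $t_0(\star_s(g_1,\dots,g_k))$ from $\beta_s$ and the inductively obtained $t_j(g_j)$. The only cosmetic difference is that the paper packages your alternating Möbius product as the iterated commutator $[\dots[[\beta_s',\gamma_1],\gamma_2],\dots,\gamma_k]$ (indeed, your expanded version appears verbatim in a commented-out block of the source), and places the free slots after rather than before the variables used by $f$.
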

\begin{proof}
    By linearity, it is sufficient to prove this for elements of the free $S$-magma, namely for rooted tree. We proceed by induction on the tree's height, the case of single leafs being covered by the elementary matrices in $\SL_n(R)\subset\Gamma_{n,N,\operad}$.

    Consider therefore $s\in S$, write $k=\ar{s}$, and consider a term $f=\star_s(f_1,\dots,f_k)$ with $f_1,\dots,f_k\in R\langle x_1,\dots,x_\ell\rangle_\operad$ for some $\ell\in\N$ satisfying $k+\ell<n$. By induction, there are transvections $t_0(f_i)$ in $\Gamma_n$, and since $\Gamma_n$ contains all even permutations of the variables we may assume by induction that $\Gamma_n$ contains the transvections
    \[
        \gamma_j\coloneqq t_{\ell+j}(f_j)\text{ for }1\le j\le k.
    \]
    Note that the $\gamma_j$ all commute with each other. There is also in $\Gamma_n$ a conjugate $\beta_s'$ of $\beta$ that is the transvection
    \[
        \beta_s'\coloneqq t_0(\star_s(x_{\ell+1},\dots,x_{\ell+k})).
    \]
    By a direct computation,
    \begin{align*}
    [\beta_s',\gamma_j] &= (\beta_s')^{-1} (\beta_s')^{\gamma_j}
    = t_0(-{\star_s}(x_{\ell_1},\dots,x_{\ell+k}))\; t_0(\star_s(x_{\ell+1},\dots,x_{\ell+j}+f_j,\dots,x_{\ell+k}))\\
    &= t_0(\star_s(x_{\ell+1},\dots,f_j,\dots,x_{\ell+k}),
    \end{align*}
    so the iterated commutator $[\dots[\beta'_s,\gamma_1],\dots,\gamma_k]$ is the transvection $t_0(f)$ which thus belongs to $\Gamma$.
\end{proof}

\section{Representations}
For an $\operad$-algebra $A$ and $X=\{x_0,\dots,x_{n-1}\}$, consider the set $\mathscr R_{n,A}$ of $\operad$-algebra homomorphisms $R\langle X\rangle_\operad\to A$. Such a homomorphism is uniquely determined by the images of $x_0,\dots,x_{n-1}$, which are arbitrary elements of $A$ since $R\langle X\rangle_\operad$ is free. We may therefore naturally identify $\mathscr R_{n,A}$ with $A^n$.

The automorphism group of $R\langle X\rangle_\operad$ naturally acts on $\mathscr R_{n,A}$ by pre-composition. Under the identification of $\mathscr R_{n,A}$ with $A^n$, 
the generators $\alpha_i$ (for $1 \leq i < n$) act as $(a_0,\dots,a_{n-1})\mapsto(a_0,\dots,a_i-a_{i+1},\dots,a_{n-1})$, etc.

Furthermore, the action of $R\langle X\rangle_\operad$ commutes with the action of the automorphism group of $A$ by post-composition. Again choosing $R=\Z[1/N]$, we obtain an action of $\Gamma_{n,N,\operad}$ on $A^n/\Aut(A)$.

\begin{definition}
    An $\operad$-algebra $A$ over $R$ is called \emph{minimal} if its only subalgebras are $A$ and the $0$-submodule. Here by a subalgebra of $A$ we mean an $R$-submodule which is closed under all operad operations. 
\end{definition}

\begin{thm}\label{thm:hightransitive}
    Let $A$ be an $\operad$-algebra, and choose $n\ge \ar{S}+2$.
    \begin{enumerate}
        \item If $A$ is minimal and non-trivial then the action of $\Gamma_{n,N,\operad}$ on $A^n$ has two orbits: the fixed point $0^n$ and a large orbit consisting of all other points.
        \item If $A$ is minimal then the induced action of $\Gamma_n$ on $\Omega_{n,A}\coloneqq(A^n\setminus 0^n)/\Aut(A)$ is $k$-transitive, for all $k$ less than the number of $\Aut(A)$-orbits in $A$.
        In particular, if $A$ is finite and $|\Omega|> 25$ and $\Aut(A)$ has at least $6$ orbits on $A$ then $\Gamma_n$ acts on $\Omega$ as a full alternating or symmetric group.
        \item If $A,A'$ are two non-isomorphic minimal algebras then the actions of $\Gamma_n$ on $\Omega_{n,A}$ and on $\Omega_{n,A'}$ are not isomorphic.
    \end{enumerate}
\end{thm}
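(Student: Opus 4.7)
For Part (1), the point $0^n$ is fixed because every generator of $\Gamma_n$ is a transvection. Given a non-zero $\bar a=(a_0,\dots,a_{n-1})$, I would use the $\SL_n(R)\subseteq\Gamma_n$ to permute coordinates so that $a_0\neq 0$. By minimality of $A$, the sub-$\operad$-algebra generated by $a_0$ alone is all of $A$. Theorem~\ref{thm:transvections} supplies, for each $j\ge 1$, the transvections $t_j(f)$ with $f$ ranging over the free algebra in any $n-1-\ar{S}$ variables other than $x_j$; specialising to $f$ a term in $x_0$ alone, one can add an arbitrary element of $A$ to each $a_j$, reducing $\bar a$ to the standard form $(a_0,0,\dots,0)$. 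A ping-pong $(a,0,\dots,0)\to(a,b,0,\dots)\to(a',b,0,\dots)\to(a',0,\dots,0)$, using that each of $a,a',b$ generates $A$, then shows all non-zero tuples lie in one orbit.

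For Part (2), an ordered $k$-tuple of distinct orbits in $\Omega_{n,A}^k$ is the same datum as an $\Aut(A)^k$-orbit of a homomorphism $\Phi\colon F\to A^k$ with $\Phi(x_i)=(a^1_i,\dots,a^k_i)$, whose $k$ component homomorphisms $\phi^j=\pi_j\circ\Phi$ are pairwise non-$\Aut(A)$-equivalent. The first key step is a Goursat-type lemma: since $A$ is minimal it has no proper non-trivial ideals (ideals being themselves subalgebras), so $A$ is congruence-simple; a subdirect product $\Phi(F)\subseteq A^k$ must therefore equal $A^k$ unless two of the $\phi^j$ are related by an element of $\Aut(A)$, which is excluded by our hypothesis. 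Thus $k$-transitivity of $\Gamma_n$ on $\Omega_{n,A}$ reduces to the statement that $\Gamma_n\times\Aut(A)^k$ acts transitively on the set of surjective homomorphisms $F\to A^k$.

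I would prove this latter transitivity by adapting the argument of Part (1). Theorem~\ref{thm:transvections} again provides enough transvections to normalise any surjective $\Phi$, and a ping-pong across the several $A$-factors of $A^k$ should show that the normal forms lie in one orbit. The main technical obstacle, which I expect to be the hardest step, is that no single element of $A^k$ generates the whole algebra—e.g.\ $(a,0,\dots,0)$ generates only one factor—so the standard form must be spread across several coordinates simultaneously, and one must orchestrate the ping-pong so as to preserve surjectivity onto $A^k$ at every intermediate step. Once $k$-transitivity is established, the "in particular" statement follows from the classical classification of multiply transitive permutation groups: a $6$-transitive group acting on more than $25$ points contains the alternating group, and with at least $6$ orbits of $\Aut(A)$ on $A$ the hypothesis of Part~(2) yields $6$-transitivity.

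For Part (3), a $\Gamma_n$-equivariant bijection $\Omega_{n,A}\cong\Omega_{n,A'}$ identifies the stabilizer lattices on both sides. Combining the $\Gamma_n$-orbit structure on finite products $\Omega_{n,A}^k$ with their pointwise stabilizers, one can recover the abstract $\operad$-algebra $A$ together with its $\Aut(A)$-action (concretely, from the Goursat analysis of Part~(2) the fibre structure of homomorphisms $F\to A^k$ is visible in the $\Gamma_n$-action), so the same data must reconstruct $A'$, forcing $A\cong A'$.
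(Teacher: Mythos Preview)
Your Parts~(1) and~(3) match the paper's approach; for~(3) the paper is simply more concrete, identifying $(A\setminus0)\times 0^{n-1}/\Aut(A)\subset\Omega_{n,A}$ as the common fixed set of the transvections $t_i(\cdot)$ for $1\le i<n$, and then reading the $\operad$-structure directly off the action of the $t_0(\cdot)$ on that set.

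Part~(2) has a real gap, and ironically you already hold the key that dissolves it. Your Goursat argument, specialised to the one-variable free algebra $R\langle x\rangle_\operad$, says precisely: if $a_1,\dots,a_k\in A\setminus0$ lie in pairwise distinct $\Aut(A)$-orbits, then $(\eval_{a_1},\dots,\eval_{a_k})\colon R\langle x\rangle_\operad\to A^k$ is surjective---equivalently, the single element $(a_1,\dots,a_k)$ \emph{does} generate $A^k$. This is the paper's Lemma~\ref{lem:crt}, and it makes your stated obstacle (``no single element of $A^k$ generates the whole algebra'') disappear. The paper then avoids any multi-coordinate ping-pong: it inducts on $k$, normalising $v_1,\dots,v_k$ to the standard forms $(a_i,0,\dots,0)$ and then moving $v_{k+1}$ to $(a_{k+1},0,\dots,0)$ using transvections $t_j(f(x_0))$ where, by Lemma~\ref{lem:crt}, one can choose $f$ with $f(a_1)=\cdots=f(a_k)=0$ and $f(a_{k+1})$ arbitrary, so that $v_1,\dots,v_k$ stay fixed throughout. (If $v_{k+1}$ is supported only in coordinate~$0$, its entry there lies in a new $\Aut(A)$-orbit, and Lemma~\ref{lem:crt} again supplies a transvection fixing $v_1,\dots,v_k$ while pushing $v_{k+1}$ off that axis.) Your proposal never reaches this mechanism, and the ``standard form spread across several coordinates'' you anticipate is unnecessary once the one-variable CRT is in hand.
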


\noindent We begin by an analogue of the Chinese Remainder Theorem for minimal algebras:
\begin{lemma}\label{lem:crt}
    Let the $\operad$-algebra $A$ be minimal. For any elements $a_1,\dots,a_k\in A\setminus 0$ in distinct $\Aut(A)$-orbits and for every $b_1,\dots,b_k\in A$ there exists $v\in R\langle x\rangle_\operad$ such that the substitution $x \mapsto a_i$ maps $v$ to $b_i$, i.e., $\eval_{a_i}(v) = b_i$ where $\eval_{a}\colon R\langle x \rangle_\operad \to A$ is the evaluation map $x\mapsto a$.
\end{lemma}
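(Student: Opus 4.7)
The plan is to induct on $k$. The base case $k=1$ is immediate: $\eval_{a_1}(R\langle x\rangle_\operad)$ is an $\operad$-subalgebra of $A$ containing the nonzero element $a_1$, hence equals $A$ by minimality. For the inductive step, assemble the evaluation maps into the single $\operad$-algebra homomorphism $\Phi\colon R\langle x\rangle_\operad\to A^k$ given by $\Phi(v)=(\eval_{a_1}(v),\dots,\eval_{a_k}(v))$; the lemma is the statement that its image $B$ is all of $A^k$. Note that $B$ is an $\operad$-subalgebra, and applying the inductive hypothesis to $a_1,\dots,a_{k-1}$ shows that the projection of $B$ onto the first $k-1$ coordinates is surjective.

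Introduce $C=\{c\in A:(0,\dots,0,c)\in B\}$. Componentwise multilinearity on $A^k$ makes $C$ an $\operad$-subalgebra of $A$, so by minimality $C\in\{0,A\}$. If $C=A$, then $0^{k-1}\times A\subseteq B$; combining this with surjectivity of the first $k-1$ projections and with the $R$-module structure of $B$ immediately gives $B=A^k$, finishing the induction.

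It remains to rule out $C=0$. In that case, the $k$-th coordinate of each element of $B$ is uniquely determined by the first $k-1$, so $B$ is the graph of a surjective $\operad$-algebra homomorphism $\psi\colon A^{k-1}\to A$ with $\psi(a_1,\dots,a_{k-1})=a_k$. Set $\psi_j(a):=\psi(0,\dots,0,a,0,\dots,0)$ with $a$ in the $j$-th position; each $\psi_j$ is an $\operad$-endomorphism of $A$, and by simplicity of $A$ (minimality forces every ideal, being a subalgebra, to be $0$ or $A$), $\psi_j$ is either zero or lies in $\Aut(A)$; by $R$-linearity, $\psi(b_1,\dots,b_{k-1})=\sum_j\psi_j(b_j)$.

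The main obstacle is to show that at most one $\psi_j$ is nonzero. Suppose for contradiction that $\psi_{j_1},\psi_{j_2}\in\Aut(A)$ for some $j_1\ne j_2$. Evaluate $\psi$ on an arity-$n$ operation $\star$ ($n\ge2$) applied to $n$ inputs each supported only on the $j_1$- and $j_2$-coordinates, and equate the result computed via the homomorphism property of $\psi$ with the result computed componentwise using that $\psi_{j_1},\psi_{j_2}$ are themselves algebra homomorphisms: after the substitutions $u_i=\psi_{j_1}(a_i)$ and $v_i=\psi_{j_2}(a'_i)$ (which range independently over $A$) this yields the identity $\star(u_1+v_1,\dots,u_n+v_n)=\star(u_1,\dots,u_n)+\star(v_1,\dots,v_n)$. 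Specializing $u_i=0$ for $i\ge2$ and $v_1=0$, and expanding the left-hand side by multilinearity, collapses this to $\star(u_1,v_2,\dots,v_n)\equiv0$ on $A^n$; hence every operation of arity $\ge2$ acts trivially on $A$, contradicting the nontriviality of the $\operad$-algebra structure. Thus exactly one $\psi_{j_0}$ is nonzero and lies in $\Aut(A)$, so $\psi=\psi_{j_0}\circ\pi_{j_0}$, and $a_k=\psi_{j_0}(a_{j_0})$ places $a_{j_0}$ and $a_k$ in the same $\Aut(A)$-orbit, contradicting our hypothesis.
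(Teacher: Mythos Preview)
Your argument follows the same inductive strategy as the paper: reduce to surjectivity of the joint evaluation $R\langle x\rangle_\operad\to A^k$, and in the degenerate case produce an automorphism of $A$ carrying some $a_j$ to $a_k$. You are in fact more careful than the paper on the delicate point: the paper asserts without justification that each $\phi_i$ sends $a_i$ to $a_{k+1}$, which is not obvious and need not hold when several $\phi_i$ are nonzero; you correctly argue that at most one $\psi_j$ can be nonzero, so that $\psi=\psi_{j_0}\circ\pi_{j_0}$ and hence $\psi_{j_0}(a_{j_0})=a_k$.

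There is, however, a gap in your last sentence. Having correctly derived that every operation of arity $\ge 2$ acts as zero on $A$, you say this ``contradict[s] the nontriviality of the $\operad$-algebra structure'', but no such nontriviality is among the hypotheses of the lemma: nothing forbids all higher-arity operations from vanishing on a given minimal $A$. The genuine contradiction is with the hypothesis $k\ge 2$. Once every arity-$\ge 2$ operation vanishes on $A$, the subalgebras of $A$ are exactly its submodules over the ring $T$ generated by $R$ together with the unary operations, so minimality makes $A$ a simple $T$-module. By Schur's lemma $D:=\mathrm{End}_T(A)$ is a division ring, and for each nonzero $a$ the set $Da$ is a nonzero $T$-submodule, hence all of $A$; thus $D^\times\subseteq\Aut(A)$ already acts transitively on $A\setminus 0$, and there cannot be two elements in distinct $\Aut(A)$-orbits. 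With this one-line replacement for your final clause, the proof is complete and in fact cleaner than the paper's.
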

\begin{proof}
    The proof is by induction on $k$. The base case $k=1$ follows from the minimality of $A$ which implies that the evaluation map $\eval_{a_1}$ is a  surjective map $R\langle x\rangle_\operad \to A$. Assuming the statement for $k$, the evaluation maps at $a_1,\dots, a_k$ yield a surjection 
    \[
        \eval_{a_1}\times\cdots\times\eval_{a_k} \colon R\langle x\rangle_\operad \to A^k.
    \] 
    The kernel $V$ of this map is a subalgebra of $R\langle x\rangle_\operad$ because $\operad$ has no constants; and the evaluation at $a_{k+1}$ maps $V$ to a subalgebra of $A$. Since $A$ is minimal, the image is either the whole of $A$, proving the induction step, or is $0$. In the last case $\eval_{a_{k+1}}$ is identically zero on $V$, so induces a (still surjective) algebra homomorphism $A^k \to A$. Pre-composing this homomorphism with the $i$th embedding $A\to A^k$ we obtain a homomorphism $\phi_i\colon A \to A$ mapping $a_i$ to $a_{k+1}$; and $\phi_i$ is non-zero so its kernel is $0$ and its image is $A$, i.e., $\phi_i$ is an automorphism of $A$; therefore $a_i$ and $a_{k+1}$ are in the same orbit of $\Aut(A)$.
\end{proof}

\begin{proof}[Proof of Theorem~\ref{thm:hightransitive}]
    (1) Consider $A$ a minimal algebra and $a\in A$ a non-zero element. Since $A$ is minimal, $a$ generates the whole algebra $A$, and we will show that the $\Gamma_n$-orbit of $(a,0,\dots,0)$ contains every non-zero element of $A^n$.

    Consider $(a_0,\dots, a_{n-1}) \in A^n \setminus 0^n$. Since $\Gamma$ contains the group of even permutations we can assume that $a_{n-1}\not=0$. Thus, each of $a_0-a, a_1, a_2,\dots,a_{n-1}$ may be respectively written as an expression $v_i(a_{n-1})$ since $a_{n-1}$ is non-zero and thus generates the algebra $A$. By Theorem~\ref{thm:transvections} and conjugation, the transvection $t_i(v_i)$ belongs to $\Gamma_n$ for all $1\le i<n$. Applying them in sequence, we see that $(a,0,\dots,a_{n-1})$ is in the same orbit as  $(a_0,a_1,\dots,a_{n-1})$. Finally, $a_{n-1}$ may be written as an expression in $a$ and another transvection from $\Gamma_n$ sends  $(a,0,\dots,a_{n-1})$ to  $(a,0,\dots,0)$.

    (2) For the second statement, we shall prove that the action of $\Gamma_n$ is $k$-transitive whenever $k$ is at most the number of $\Aut(A)$-orbits on $A\setminus 0$. Using Lemma~\ref{lem:crt}, the proof of $k$-transitivity is standard. Consider $a_1,\dots,a_k \in A \setminus 0$ in different orbits under $\Aut(A)$. Let $v_1,\dots,v_k$ be vectors in $A^n\setminus 0$ which are in different $\Aut(A)$-orbits under the diagonal action. We use induction on $k$ to show that there is an element in $\Gamma$ which sends $v_i$ to $(a_i,0,\dots, 0)$ for all $i=1,\dots,k$. The base case $k=1$ is the first statement of the theorem. For the induction step we can assume that $v_i=(a_i,0,\dots, 0) $ for $i=1,\dots, k$.
    If some coordinate $b_{k+1,j}$ of $v_{k+1}$ is non-zero for some $j>0$, then we can find a transvection which changes the zeroth coordinate of $v_{k+1}$ to $a_{k+1}$ and fixes $v_i$ for $i=1,\dots, k$ and then use Lemma~\ref{lem:crt} to move the resulting vector to $(a_{k+1},0,\dots, 0)$. Otherwise the zeroth coordinate of $v_{k+1}$ is in a different $\Aut(A)$-orbit than $a_1, \dots a_{m}$ and again by Lemma~\ref{lem:crt} we can find a transvection which fixes $v_1,\dots, v_k$ and makes some other coordinate of $v_{k+1}$ non-zero.

    The final claim in (2) follows from the well-known fact that there are no highly transitive groups acting on large finite sets except the alternating and the symmetric group.
  
    (3) For the last statement, let us assume that the actions of $\Gamma_n$ on $\Omega_{n,A}$ and on $\Omega_{n,A'}$ are isomorphic. Then, using the language of group theory, we can characterize the respective subsets $(A\setminus0)\times0^{n-1}$ and $(A\setminus0)\times0^{n-1}$ as the fixed sets of all transvections $t_i(a)$ with $1\le i<n$. The action of transvections $t_0(a)$ being isomorphic on these two sets then directly lets us reconstruct the $\operad$-algebra structure on $A,A'$ from the $\Gamma_n$-action.
\end{proof}
\begin{remark}
    It is likely that the minimality assumption on $A$ can be replaced with a weaker one, such as simplicity plus a small extra assumption (such as a bound on the number of generators of the subalgebras of $A$).
    This will slightly change the statement, to a claim that there is one large orbit consisting of all generating tuples of $A$. However, this will significantly complicate the proof, see~\cite{caprace-kassabov:t}. 
\end{remark}
\begin{remark}
    The last conclusion of (2) relies on the classification of finite simple groups. This dependence can be avoided when $\Aut(A)$ is much smaller than $A$, since it can be shown without using the classification that there are no non-trivial $k$-transitive groups on $n$ points for $k \gg \log n$ and $n$ sufficiently large. 
\end{remark}

It may seem that Theorem~\ref{thm:hightransitive} requires a too strong assumption --- minimality of $A$, rather than, say, simplicity. For example, in the category of associative algebras there are very few minimal algebras (since every minimal algebra is commutative). However, for the free operad $\operad_S$ as soon as $S$ contains enough operations, minimal algebras are the norm rather than the exception:
\begin{thm} \label{thm:minimal}
    Assume that $S$ contains at least two operations and that $\operad$ is free on $S$. Then, for every finite-dimensional vector space $V$ over a field $\mathbb K$, the collection of minimal $\operad$-algebra structures on $V$ is Zariski-dense among all $\operad$-algebra structures.

    In particular, for every prime $p$  the proportion of minimal algebras among all $\operad$-algebra structures on $(\Z/p)^k$ is at least $1 -6p^{(1-|S|)(k-1)}$.
\end{thm}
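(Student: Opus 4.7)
The plan is to parametrize $\operad$-algebra structures as points of an affine space and show that non-minimality is a proper closed condition of controlled codimension. Since $\operad=\operad_S$ is free on $S$, an algebra structure on $V$ (with $\dim V=k$) is uniquely determined by a tuple $(\star_s)_{s\in S}$ of $\mathbb K$-multilinear maps $\star_s\colon V^{\ar{s}}\to V$, so the space of structures is the affine space $\mathcal X=\prod_{s\in S}\mathrm{Hom}(V^{\otimes\ar{s}},V)$ of dimension $\sum_s k^{\ar{s}+1}$.

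I would then form, for each $d$ with $1\le d\le k-1$, the incidence variety
\[
\mathcal I_d=\{(W,\star)\in\mathrm{Gr}(d,V)\times\mathcal X:\star_s(W^{\ar{s}})\subseteq W\text{ for every }s\in S\}.
\]
For each fixed $W$ of dimension $d$, the condition that $W$ be a subalgebra is a system of $(k-d)\sum_s d^{\ar{s}}$ independent linear equations on $\mathcal X$ (one for each tuple of basis vectors of $W$ and each complementary direction in $V/W$), so the fiber of the projection $\mathcal I_d\to\mathrm{Gr}(d,V)$ has codimension $(k-d)\sum_s d^{\ar{s}}$ in $\mathcal X$. Combined with $\dim\mathrm{Gr}(d,V)=d(k-d)$ this gives
\[
\dim\mathcal I_d-\dim\mathcal X=(k-d)\bigl(d-\sum_s d^{\ar{s}}\bigr).
\]
Because $\ar{s}\ge 1$ for every $s$ and $|S|\ge 2$, I have $\sum_s d^{\ar{s}}\ge |S|\cdot d>d$ for every $d\ge 1$, so $\dim\mathcal I_d<\dim\mathcal X$. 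Then Chevalley's theorem (the image of a morphism of varieties is constructible, of dimension no more than that of the source) implies that the projection of $\mathcal I_d$ to $\mathcal X$ has closure $Y_d$ a proper Zariski-closed subvariety. The non-minimal structures are contained in $\bigcup_{d=1}^{k-1}Y_d$, itself a proper closed subvariety, which proves the Zariski-density statement.

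For the count over $\mathbb K=\Z/p$, the plan is a direct union bound. For each $d$-dimensional subspace $W\le\mathbb F_p^k$, the proportion of structures preserving $W$ is $p^{-(k-d)\sum_s d^{\ar{s}}}\le p^{-|S|d(k-d)}$, and the number of such subspaces is the Gaussian binomial coefficient $\binom{k}{d}_p$, bounded via the usual Euler product by $4p^{d(k-d)}$ for all $p\ge 2$, with the sharper bound $\binom{k}{1}_p\le 2p^{k-1}$ at the extremes. So the proportion of non-minimal structures is at most $\sum_{d=1}^{k-1}\binom{k}{d}_p\,p^{-|S|d(k-d)}$. Using the identity $d(k-d)=(k-1)+(d-1)(k-1-d)$, the extreme terms $d=1$ and $d=k-1$ each contribute at most $2p^{(1-|S|)(k-1)}$; any interior term $d\in\{2,\dots,k-2\}$ contributes at most $4p^{(1-|S|)(k-1)-(k-3)}$; summing the $k-3$ interior terms and checking that $4(k-3)p^{-(k-3)}\le 2$ for $k\ge 4$, $p\ge 2$ yields the claimed total $\le 6\,p^{(1-|S|)(k-1)}$.

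The main obstacle is this final combinatorial optimization: a naive union bound picks up an extra multiplicative factor of order $k$, so one must separate the two extremal indices (where the exponent $d(k-d)=k-1$ is minimal) from the interior, where the additional saving $(d-1)(k-1-d)$ in the $p$-exponent more than absorbs both the $k-3$ interior term count and the Gaussian binomial bound. Everything else (the incidence-variety set-up, the linear codimension count for $Z_W$, and Chevalley's theorem) is standard.
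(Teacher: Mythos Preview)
Your argument is correct and follows essentially the same route as the paper: parametrize structures by an affine space, compute the codimension of the locus where a fixed $d$-dimensional $W$ is a subalgebra, and compare with $\dim\mathrm{Gr}(d,V)=d(k-d)$. The paper does this dimension count informally (``the union of all these subspaces is thus a variety of codimension at least $\sum_s d^{\ar s}(k-d)-d(k-d)$''); your use of the incidence variety and Chevalley's theorem is the standard way to make that step rigorous.

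For the $\Z/p$ count, both arguments start from the same union bound $\sum_d\binom{k}{d}_p\,p^{-|S|\,d(k-d)}$ and separate the extremal indices $d\in\{1,k-1\}$ from the interior. The paper controls the interior via $\binom{k}{d}_p\le\binom{k}{d}p^{d(k-d)}$ and $d(k-d)\ge 2(k-2)$, obtaining a contribution $(2^k-2-2k)\,p^{-2(|S|-1)(k-2)}$; this suffices only for $p\ge 3$ or $k\ge 6$, and the remaining small cases are verified directly. Your factorization $d(k-d)=(k-1)+(d-1)(k-1-d)$ together with the uniform Euler-product bound $\binom{k}{d}_p\le 4\,p^{d(k-d)}$ is slightly cleaner: the inequality $4(k-3)p^{-(k-3)}\le 2$ for $k\ge 4$, $p\ge 2$ absorbs all interior terms at once and yields the constant $6$ for every $p\ge 2$ and every $k$, with no residual case-checking.
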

\begin{proof}
    Let us first write $V=\mathbb K^k$, a $k$-dimensional vector space.

    A multilinear operation $\star_s$ on $V$, of arity $\ar{s}$, is a linear map $V^{\otimes \ar{s}}\to V$, and the space of such maps has dimension $k^{\ar{s}+1}$. The set $\Sigma$ of $\operad$-algebra structures on $V$ is therefore a vector space of dimension $\sum_{s\in S}k^{\ar{s}+1}$.

    For any choice of a subspace $W\le V$, say of dimension $d$, the fact that $\star_s$ maps $W^{\otimes \ar{s}}$ back to $W$ is a linear condition imposing $d^{\ar{s}}(k-d)$ independent constraints. The subspace of $\Sigma$ consisting of algebras for which $W$ is a subalgebra therefore has codimension $\sum_{s\in S}d^{\ar{s}}(k-d)$.

    The union of all these subspaces, as $W$ varies over the Grassmann variety of $d$-dimensional subspaces, is thus a variety of codimension at least
    \[
        \sum_{s\in S}d^{\ar{s}}(k-d)-d(k-d),
    \]
    which is positive as soon as $S$ contains at least two operations.

    In the case of $\operad$-algebra structures on $(\Z/p)^k$ the above arguments show that the probability of a non-minimal structure is bounded by
    \[
        \sum_{d=1}^{k-1} \binom{k}{d}_p\; p^{-\sum_{s\in S}d^{\ar{s}}(k-d)}, 
    \]
    where the $p$-binomial coefficient $\binom{k}{d}_p=(p)_k/(p)_d(p)_{k-d}$ is the number of subspaces of $(\Z/p)^k$ of dimension $d$; here $(p)_k=(1-p)\cdots(1-p^k)$. Since all operations have arity at least $1$ and there are $|S|$ operations we have the following obvious upper bound
    \[
        \sum_{d=1}^{k-1} \binom{k}{d}_p\; p^{-|S| d(k-d)} 
    \]
    It not difficult to see that the contribution of each of the terms for $d=1$ and $d=k-1$ is bounded above by $2\frac{p}{p-1} p^{(1-|S|)(k-1)}$, which is ${}\leq 3 p^{(1-|S|)(k-1)}$ for $p>2$. 
    For all other terms we can use 
    \[
        \binom{k}{d}_p \leq \binom{k}{d} p^{d(k-d)},
    \]
    since $\binom{k}{d}_p$ counts strings $\sigma\in\{0,1\}^k$ with $d$ ones and weighted by $p^{|\{i<j:\sigma_i>\sigma_j\}|}$; this gives that the contribution of all other terms is bounded above by
    \[
        \sum_{d=2}^{k-2}\binom{k}{d}\;p^{-(|S|-1)2(k-2)},\text{ which is }{}\le (2^k-2-2k) p^{-2(|S|-1)(k-2)}\text{ if }k\ge3.
    \]
    These bounds are sufficient to prove the desired inequality for $p\geq 3$ or $k \geq 6$ and the remaining cases can be verified directly.
\end{proof}
\begin{remark}
    The probability that a random $\operad$-structure on $(\Z/p)^k$ has a $1$-dimensional subalgebra is approximately $p^{(1-|S|)(k-1)}$, so the above bound is close to optimal. It can be improved to $1 - (2+\epsilon)p^{(1-|S|)(k-1)}/(1-p^{1-|S|})$ for every $\epsilon>0$ and large enough $p$. Of course all these bounds say nothing in case $k=1$, when every algebra structure is clearly minimal.
\end{remark}

The next issue before applying Theorem~\ref{thm:hightransitive} is to show that generically the automorphism group of an $\operad_S$-algebra is very small. It is reasonable to assume that generically the only automorphisms are scalars --- a quick computation shows that $\lambda \Id$ is an automorphism of an algebra $A$ if and only if $\lambda^{\ar{s}-1} = 1$ for all $s\in S$.
Indeed this is the case:
\begin{thm}\label{thm:noauto}
    Assume that $S$ contains at least two operations. Then for any prime $p\geq 2$ most mimimal $\operad_S$-algebra structures on $(\Z/p)^k$ have ``trivial'' automorphism group, namely
    \[
        \Aut(A)  = \{ \lambda \mathsf{Id} \mid \lambda^{\ar{s}-1} = 1,\;\forall s\in S\}.
    \]
    More precisely the number of minimal algebras with non-trivial automorphism groups is less than $1/p^k$ of all possible algebra structures $p^{\sum_{s\in S} k^{\ar{s}+1} }$.
\end{thm}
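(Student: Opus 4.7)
Our plan is a union bound. Writing $T \coloneqq \{\lambda\Id : \lambda \in \mathbb{F}_p^*,\ \lambda^{\ar{s}-1}=1 \text{ for all } s \in S\}$ for the ``trivial'' automorphism group, the number of algebra structures $A$ with $\Aut(A) \supsetneq T$ is at most $\sum_{\phi \in \GL(V) \setminus T} |\{A : \phi \in \Aut(A)\}|$. For each fixed $\phi$, the condition $\phi \in \Aut(A)$ is the linear system $\phi \circ \star_s = \star_s \circ \phi^{\otimes \ar{s}}$, so the algebras with $\phi$ as an automorphism form a linear subspace of $\Sigma \coloneqq \bigoplus_s \mathrm{Hom}(V^{\otimes \ar{s}}, V)$ of some dimension $D(\phi) = \sum_s D_s(\phi)$, and hence contain $p^{D(\phi)}$ structures. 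It thus suffices to prove that $\sum_{\phi \notin T} p^{-C(\phi)} < p^{-k}$, where $C(\phi) \coloneqq \sum_s k^{\ar{s}+1} - D(\phi)$.

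The dimension $D_s(\phi) = \dim \mathrm{Hom}_\phi(V^{\otimes \ar{s}}, V)$ is computed by passing to $\overline{\mathbb{F}_p}$ and using the identification $\mathrm{Hom}(V^{\otimes a}, V) \cong V \otimes (V^*)^{\otimes a}$: assuming $\phi$ semisimple with eigenvalues $\lambda_1, \ldots, \lambda_k$ (the non-semisimple case only lowers the fixed-point dimension and is treated analogously with generalized eigenspaces), one gets the elementary formula
\[
D_s(\phi) = \bigl|\bigl\{(i_0, \ldots, i_{\ar{s}}) \in \{1,\ldots,k\}^{\ar{s}+1} : \lambda_{i_0} = \lambda_{i_1} \cdots \lambda_{i_{\ar{s}}}\bigr\}\bigr|,
\]
so $C_s(\phi)$ counts the tuples for which the eigenvalue equation fails.

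The estimate splits into two cases. If $\phi = \lambda\Id$ is a scalar with $\lambda^{\ar{s}-1} \neq 1$ for some $s \in S$, then no tuple satisfies $\lambda = \lambda^{\ar{s}}$, so $D_s(\phi) = 0$ and $C(\phi) \geq k^{\ar{s}+1} \geq k^2$; summed over the at most $p-1$ such scalars the contribution is at most $(p-1)p^{-k^2}$. If $\phi$ is non-scalar, then $\phi$ has at least two distinct eigenvalues $\alpha \neq \beta$, and a combinatorial count using tuples of the form $i_1 = \cdots = i_{\ar{s}} = \alpha$-index and $i_0 = \beta$-index yields $C_s(\phi) \geq c_{\ar{s}} k^{\ar{s}}$ for some positive constant $c_{\ar{s}}$ (which in the unary case specializes to the usual centralizer-dimension bound $C_s(\phi) \geq 2(k-1)$). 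Partitioning the non-scalar $\phi$'s by conjugacy class, bounding each class size by $|\GL(V)|/|Z(\phi)|$, and summing, the resulting weighted series stays below $p^{-k}$ because the assumption $|S| \geq 2$ provides at least two independent codimension contributions.

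The main obstacle is the non-scalar case: one must control $C(\phi)$ uniformly across roughly $p^{k^2}$ non-scalar elements of $\GL(V)$, and the worst case is a ``nearly scalar'' $\phi$ with one eigenvalue of multiplicity $k-1$ and one of multiplicity $1$, for which $C_s(\phi)$ drops to order $k^{\ar{s}}$ rather than $k^{\ar{s}+1}$. This is salvaged by two facts: such $\phi$ inhabit conjugacy classes of size at most $\sim p^{2(k-1)}$ (bounded by the centralizer $\GL_{k-1} \times \GL_1$), and the hypothesis $|S| \geq 2$ ensures additive codimension contributions from two independent operations. Carrying out the bookkeeping, verifying the combinatorial lower bound on $C_s(\phi)$ for every similarity type of $\phi$, is the chief technical task.
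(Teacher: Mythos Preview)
Your union-bound framework and the eigenvalue formula for $D_s(\phi)$ are correct and match the paper's computation. However, you miss the paper's central simplification: it \emph{uses the minimality of $A$}. If $A$ is minimal and $\phi\in\Aut(A)$, then any eigenvector $a$ of $\phi$ (over an extension field) generates $A\otimes K$, so $A\otimes K$ is a quotient of the graded algebra $K\langle x\rangle_\operad$; since $\phi$ scales the degree-$i$ part by $\lambda^i$, this forces $\phi$ to be \emph{diagonalizable with eigenvalues among the powers $\lambda^i$}. From this one reads off directly that every eigenspace has dimension $\le k-1$, whence $C_s(\phi)\ge k^{\ar s}$ for each $s$, and with $|S|\ge 2$ (and one operation of arity $\ge2$, the arity-1-only case being disposed of instantly since then $\phi$ is scalar) one gets the uniform bound $C(\phi)\ge k^2+k$. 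A crude union bound over all $<p^{k^2}$ matrices then finishes: no conjugacy-class bookkeeping is needed.

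By discarding minimality you are proving a stronger statement (bounding \emph{all} structures with extra automorphisms), and you pay for it. Two concrete issues: first, your reduction ``the non-semisimple case only lowers $D_s$'' is correct as stated, but useless when $\phi_{ss}$ is scalar (e.g.\ $\phi$ unipotent), since then $D_s(\phi_{ss})=k^{\ar s+1}$ gives no information; these $\phi$ need a separate argument you do not supply. Second, in the all-arity-$1$ case the best pointwise bound is $C_s(\phi)\ge 2(k-1)$, so with $|S|=2$ you have $C(\phi)\ge 4(k-1)$ against $\sim p^{k^2}$ non-scalar matrices; even your proposed conjugacy-class refinement (dominant class of size $\sim p^{2(k-1)}$, plus $\sim p^{k}$ classes) yields only $\approx p^{-k+O(1)}$, not $p^{-k}$. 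The paper sidesteps this entirely: for minimal $A$ and all operations unary, the argument above gives $\phi=\lambda\Id$ outright, so the count is zero. In short, your plan is salvageable with more work in the higher-arity case, but the paper's route via minimality is both shorter and what actually delivers the stated bound uniformly.
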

\begin{proof}
    Consider $\phi\in\Aut(A)$; it is a linear map, so is given by a $k\times k$ matrix. Up to passing to a field extension, there is an eigenvector $a \in A\otimes \K$ with eigenvalue $\lambda \in \K$.

    Since $a$ generates $A\otimes \K$, we have that $A\otimes\K$ is a quotient of $\K\langle x \rangle_\operad$, so $\phi$ is uniquely determined by $a$ and $\lambda$. Moreover, since $\K\langle x \rangle_\operad$ is graded, the operator $\phi$ is diagonalizable with eigenvalues $\lambda^i$. 
    Furthermore, if all operations in $S$ have arity $1$ then $\phi$ is scalar since the whole algebra $\K\langle x \rangle_\operad$ lies in degree $1$; while if there are higher-arity operations then $\lambda$ is a root of unity. In the first case, we are done; in the second case, let $n$ be the order of $\lambda$, and for all $i\in\Z/n$ let $V_i$ be the eigenspace of $\phi$ with eigenvalue $\lambda^i$, say of dimension $d_i$.

    Let us compute the linear conditions imposed on $\phi$ by the fact that it commutes with each operation $\star_s$. It must map $V_{i_1}\otimes\cdots\otimes V_{i_{\ar{s}}}$ to $V_{i_1+\cdots+i_s}$, so the dimension of the space of $\operad$-algebra structures which commute with $\phi$ is
    \[
        \sum_{s\in S} \sum_{i_1, \dots, i_{\ar{s}}\in\Z/n} d_{i_1}\cdots d_{i_{\ar{s}}} d_{i_1+\cdots+i_{\ar{s}}}.
    \]
    Since the space $V_{i_1+\cdots+i_s}$ is not the full space, its dimension is less or equal to $k-1$.
    Therefore for each $s\in S$ we have 
    \[
        \sum_{i_1, \dots, i_{\ar{s}}\in\Z/n} d_{i_1}\cdots d_{i_{\ar{s}}} d_{i_1+\cdots+i_{\ar{s}}} \leq \sum_{i_1, \dots, i_{\ar{s}}\in\Z/n} d_{i_1}\cdots d_{i_{\ar{s}}} (k-1) = 
        k^{\ar{s} +1} - k^{\ar{s}}
    \]
    Thus the total sum is less than  
    \[
        \sum_{s\in S} k^{\ar{s}+1} -  \sum_{s\in S} k^{\ar{s}}  \leq \sum_{s\in S} k^{\ar{s}+1} -k^2 -k,
    \]
    since by assumption that there are at least $2$ operations in $S$ and one has arity at least $2$. 
    
    This shows that each candidate $\phi$ is an automorphism of a minimal algebra structure with probability at most $p^{-k^2 -k}$. Since the number of possibilities for $\phi$ is less than $p^{k^2}$, the probability that an algebra structure is minimal and has a nontrivial automorphism is less than $p^{-k}$.
\end{proof}

\begin{coro}\label{coro:main}
    Let $S$ consist of one binary operation and one operation of arity $d \geq 2$. Then the group $\Gamma_{d+2, N,\operad_S}$ has property (T) provided that $N$ is divisible by all primes less than 
    $ 3+d+4\sqrt{d-1} < 5d $. Moreover this group surjects onto $\Alt(p^{(d+2)k} -1)^{\times p^{k^{d+1}} }$ for all primes $p > 3+d+4\sqrt{d-1}$ and all $k \geq 1$.
\end{coro}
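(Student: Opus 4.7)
The property (T) assertion is the direct specialization of Theorem~\ref{thm:t}: with $S$ consisting of one binary and one arity-$d$ operation, $\ar{S}=d$, so the condition that $N$ be divisible by all primes $\le 3+d+4\sqrt{d-1}$ is precisely the hypothesis of that theorem (and $n=d+2>\ar{S}$ as required).

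For the surjection claim, the plan is to fix a prime $p>3+d+4\sqrt{d-1}$, set $V=(\Z/p)^k$, and count pairwise non-isomorphic minimal $\operad_S$-algebra structures on $V$ having trivial automorphism group. By Theorem~\ref{thm:minimal} the fraction of non-minimal structures is $O(p^{-(k-1)})$, and by Theorem~\ref{thm:noauto} the fraction of minimal structures with non-trivial automorphism group is $O(p^{-k})$. The constraints $\lambda^{2-1}=\lambda^{d-1}=1$ force $\lambda=1$, so the ``trivial'' automorphism group is literally $\{\Id\}$. The total number of structures is $p^{k^3+k^{d+1}}$; subtracting bad structures and dividing by the size of a $\GL_k(\Z/p)$-orbit (bounded by $p^{k^2}$) yields at least $p^{k^{d+1}}$ isomorphism classes of good algebras, using $k^3\ge k^2$ (with a direct count for the residual case $k=1$, where every structure is automatically minimal and exactly $p$ isomorphism classes have $\alpha\ne 0$ in the binary coordinate).

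For each such algebra $A$ the hypothesis $n\ge\ar{S}+2$ holds, $\Omega_{n,A}=A^n\setminus 0^n$ has size $N\coloneqq p^{(d+2)k}-1$, and $\Aut(A)=\{\Id\}$ has $p^k-1\ge 6$ orbits on $A\setminus 0$ in our range. Theorem~\ref{thm:hightransitive}(2) then produces an action of $\Gamma_n$ on $\Omega_{n,A}$ whose image contains $\Alt(N)$. Let $\{A_j\}_{j\in J}$ be representatives of the (at least $p^{k^{d+1}}$) good isomorphism classes. The next step is to consider the map $\Phi\colon[\Gamma_n,\Gamma_n]\to\prod_j\Alt(\Omega_{n,A_j})$, which surjects onto every factor since $[\Sym,\Sym]=\Alt$, and to conclude surjectivity by iterated Goursat: any failure would yield a pair $j\ne j'$ whose projection is the graph of an isomorphism $\phi\colon\Alt(\Omega_{n,A_j})\to\Alt(\Omega_{n,A_{j'}})$. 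Since $N\ne 6$ in our range, $\phi$ is conjugation by some bijection $\rho\colon\Omega_{n,A_j}\to\Omega_{n,A_{j'}}$; a short centralizer argument (the centralizer of $\Alt(N)$ in $\Sym(N)$ is trivial for $N\ge 5$) then upgrades $\rho$ from a $[\Gamma_n,\Gamma_n]$-equivariant bijection to a full $\Gamma_n$-equivariant one, contradicting Theorem~\ref{thm:hightransitive}(3). Hence $\Phi$ is surjective, establishing the desired quotient $\Alt(p^{(d+2)k}-1)^{\times p^{k^{d+1}}}$.

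The main obstacle is the Goursat step: verifying that the abstract isomorphism of simple quotients necessarily arises from a permutation equivalence, so that Theorem~\ref{thm:hightransitive}(3) can genuinely be invoked. The counting, while elementary, also needs the hypothesis on $p$ to absorb the constant factors from Theorems~\ref{thm:minimal} and~\ref{thm:noauto}, especially in the $k=1$ corner case.
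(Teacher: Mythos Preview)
Your argument tracks the paper's proof closely: property~(T) from Theorem~\ref{thm:t}, the count of good algebra structures from Theorems~\ref{thm:minimal} and~\ref{thm:noauto} divided by $|\GL_k(\Z/p)|$, the separate $k=1$ check, and then Theorem~\ref{thm:hightransitive} to produce the alternating quotients. The Goursat step you spell out is exactly what underlies the paper's terse ``since these actions are non-isomorphic, they can be combined into a surjection,'' so there you are in fact more careful than the paper.

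The one substantive difference is how you force the image into $\Alt$ rather than $\Sym$. You pass to $[\Gamma_n,\Gamma_n]$ and then run a centralizer argument to upgrade $[\Gamma_n,\Gamma_n]$-equivariance of the Goursat bijection back to $\Gamma_n$-equivariance. The paper instead observes in one line that every generator $\alpha_i,\beta_s$ of $\Gamma_n$ acts on $A^n$ as a transvection over $\Z/p$, hence has order $p$, which is odd; thus the image of $\Gamma_n$ in each $\Sym(\Omega_{n,A})$ already lies in $\Alt(\Omega_{n,A})$. This buys two things. First, it eliminates the commutator/centralizer detour entirely: Goursat is applied directly to $\Gamma_n$, and the isomorphism of simple quotients is automatically $\Gamma_n$-equivariant after invoking $\Aut(\Alt(N))=\Sym(N)$. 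Second, it closes a small gap in your write-up: as stated, your map $\Phi$ has domain $[\Gamma_n,\Gamma_n]$, so you have literally only shown that $[\Gamma_n,\Gamma_n]$ surjects onto the product, whereas the corollary asserts this for $\Gamma_n$ itself. Surjectivity of a finite-index subgroup onto a perfect group does not formally imply surjectivity of the ambient group, so you do need something extra here, and the odd-order-generator observation is the cleanest fix.
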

\begin{proof}
    Property (T) for the group $\Gamma_{d+2, N,\operad_S}$ is a direct consequence of Theorem~\ref{thm:t}.

    For $k \geq 2$, there are $p^{k^{d+1}}$ choices for an operation of arity $d$ on $(\Z/p)^k$. By Theorems~\ref{thm:minimal} and~\ref{thm:noauto} almost all of the operations yield minimal $\operad$-algebra structure $A$ on $(\Z/p)^k$ with trivial automorphism group. In order to count to non-isomorphic ones we need to divide by the size of the group $\GL_k(\Z/p)$. At the end it is easy to see that there are at least 
    \begin{multline*}
        p^{\sum_s k^{\ar{s}+1}} \left( 1 - 6p^{-(|S|-1)(k-1)} - p^{-k} \right) / |\GL_k(\Z/p)|  \\
        \geq p^{k^{d+1} + k^3 - k^2 } (1 - 5p^{-(d-1)(k-1)} - p^{-k} )    >  p^{k^{d+1}}
    \end{multline*}
    non-isomorphic minimal $\operad_S$ algebras structures on $(\Z/p)^k$ with trivial automorphism group.
    When $k=1$, it is easy to see that there are at least $p = p^{k^{d+1}}$ non-isomorphic $\operad_S$-structures on $\Z/p$.
    By Theorem~\ref{thm:hightransitive}(2) each of these algebras yields  a highly transitive action of $\Gamma\coloneqq\Gamma_{d+2, N,\operad_S}$ on $p^{(d+2)k} -1$ points, yielding an alternating or symmetric quotient of $\Gamma$ on that many points. Furthermore, all generators of $\Gamma$ have order $p$, which is odd by our restrictions, so this quotient is alternating. Since these actions are non isomorphic, they can be combined into a surjection from $\Gamma$ to $\Alt(p^{(d+2)k} -1)^{\times p^{k^{d+1}} }$. 
\end{proof}

\noindent Using results from~\cite{kassabov-nikolov:tau}, we can deduce:
\begin{coro}
    For every $d$ there is a group with property $(\tau)$ whose pro-finite completion is
    \[
        \prod_n \Alt(n)^{\times n^{(\log n)^d}}.
    \]
\end{coro}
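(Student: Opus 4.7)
The plan is to apply Corollary~\ref{coro:main} with exponent $d+1$ (rather than $d$) to produce a property (T) group with far more multiplicity than actually needed, and then invoke the expander-packaging results of~\cite{kassabov-nikolov:tau} to interpolate between the sparse set of alternating quotients that are produced directly and the full family $\prod_n \Alt(n)^{\times n^{(\log n)^d}}$.

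\textbf{Step 1.} Pick any prime $p$ with $p > 3+(d+1)+4\sqrt{d}$ and apply Corollary~\ref{coro:main} with parameter $d+1$ to obtain a property (T) group $\Gamma$ that surjects onto $\Alt(N_k)^{\times M_k}$ for every $k\ge 1$, where $N_k = p^{(d+3)k}-1$ and $M_k = p^{k^{d+2}}$. Setting $n = N_k$, a direct computation gives $\log M_k / (\log n)^{d+1} = \Theta(\log n)$, so in particular $M_k \gg n^{(\log n)^d}$ for $k$ large. Hence the multiplicity we have available on the sparse scales $n = N_k$ is already comfortably larger than the target.

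\textbf{Step 2.} Next I would interpolate to all $n$, not just $n \in \{N_k\}$. For an arbitrary $n$, choose the smallest $k$ with $N_k \ge n$; then $N_k \le n p^{d+3}$. The idea, following the ``subgroup-to-quotient'' machinery of~\cite{kassabov-nikolov:tau}, is to realise $\Alt(n)$ as a quotient of a boundedly generated subgroup of $\Alt(N_k)$ (e.g.\ the pointwise stabiliser of the extra $N_k - n$ points, with one additional element to ``cut off'' the complement), and to push this construction through all the direct factors simultaneously so that $\Alt(n)^{\times n^{(\log n)^d}}$ appears as a quotient of a bounded-rank subgroup of $\Gamma$. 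The uniform Kazhdan constant of $\Gamma$ then yields a uniform spectral gap on the Cayley graphs of these quotients.

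\textbf{Step 3.} Finally I would combine these quotient maps over all $n$ (and over countably many primes, choosing a single cofinal sequence) into a single abstract group $\Lambda$ whose profinite completion is precisely $\prod_n \Alt(n)^{\times n^{(\log n)^d}}$. Property $(\tau)$ for $\Lambda$ is equivalent to the family of finite quotient Cayley graphs forming a uniform expander, which follows from the uniform Kazhdan constant of $\Gamma$ propagating through the subgroup-to-quotient operation of Step 2; this propagation is exactly what is proved in~\cite{kassabov-nikolov:tau}.

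The main obstacle is Step 2: the Main Corollary only furnishes alternating quotients on the very sparse set $\{N_k\}$, yet the target demands a quotient for every $n$. The transfer from $\Alt(N_k)$ to $\Alt(n)$ has to be done with bounded generating-set size and with uniform spectral control, so that the Kazhdan constant of $\Gamma$ translates into a single $(\tau)$-constant for the whole product; losing expansion uniformity, or losing too much multiplicity in the passage, would destroy the conclusion. Fortunately this is exactly the content of the Kassabov--Nikolov packaging, which is designed to perform precisely such transfers.
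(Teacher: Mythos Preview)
Your overall strategy matches the paper's: build a property~(T) group from Corollary~\ref{coro:main} with alternating quotients on the sparse sequence $N_k=p^{ck}-1$, then invoke the packaging of~\cite{kassabov-nikolov:tau} to produce a $(\tau)$ group with the desired profinite completion. Two points of comparison are worth noting.

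First, you apply Corollary~\ref{coro:main} with parameter $d+1$ to buy a full extra $(\log n)$ in the exponent, whereas the paper uses parameter $d$ directly. Your choice is harmless and makes the multiplicity bookkeeping cleaner; the paper's choice is tighter but leaves the constant-in-the-exponent comparison between $M_k=p^{k^{d+1}}$ and $n^{(\log n)^d}$ implicit.

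Second, and more substantively, your interpolation in Step~2 goes \emph{downward}: you pick $N_k\ge n$ and realise $\Alt(n)$ as (a quotient of) the point stabiliser inside $\Alt(N_k)$. The paper instead goes \emph{upward}: it picks $N_k\le n<N_{k+1}$ and uses that $\prod_{n=N_k}^{N_{k+1}-1}\Alt(n)^{\times n^{(\log n)^d}}$ is boundedly generated by $p^{d+2}$ copies of $\Alt(N_k)^{\times M_k}$ embedded diagonally on blocks. Bounded generation is the canonical mechanism for transferring expansion, and it is the form in which~\cite{kassabov-nikolov:tau} is applied. Your stabiliser route can be made to work, but it is not the direct statement one finds there: passing to an arbitrary subgroup does not preserve spectral gap, so you would still have to exhibit the stabiliser as boundedly generated by conjugates of the known expanding generators---which in the end is the same bounded-generation argument, just phrased from the other side. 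Since both the paper's proof and yours are sketches deferring the hard step to~\cite{kassabov-nikolov:tau}, the difference is largely one of presentation rather than substance.
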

\begin{proof}[Idea of the proof.]
    For any fixed $d\geq 2$, the previous construction produces a group with property (T) which maps onto $\prod_k \Alt(p^{(d+2)k} -1)^{\times p^{k^{d+1}} }$ for some fixed prime $p$. This can be combined with the results from~\cite{kassabov-nikolov:tau} to produce a group with $(\tau)$ and pro-finite completion $\prod_k \Alt(p^{(d+2)k} -1)^{\times p^{k^{d+1}} }$. Finally use that
    \[
        \prod_{n= p^{(d+2)k} -1}^{p^{(d+2)(k+1)} -2} \Alt(n)^{\times n^{(\log n)^d}}
    \]
    can be boundedly generated by $p^{d+2}$ copies of $\Alt(p^{(d+2)k} -1)^{\times p^{k^{d+1}} }$.  
\end{proof}

We do not know for which functions $f(n)$ there exists a finitely generated group with property (T) or $(\tau)$ which maps onto $\Alt(n)^{\times f(n)}$ for all $n$ --- the above construction shows that this is possible for $\log f(n) \approx (\log n)^d$ for any fixed $d$, and on the other side one needs $\log f(n) \precsim O(n \log n)$, otherwise the minimal number of generators of $\Alt(n)^{\times f(n)}$ would be unbounded.
This question is roughly equivalent to the question for which functions $f(n)$ it is possible to turn the Cayly graphs of $\Alt(n)^{\times f(n)}$ in bounded degree expanders.

\begin{bibsection}
\begin{biblist}
\bib{bekka-harpe-valette:t}{book}{
   author={Bekka, Bachir},
   author={de la Harpe, Pierre},
   author={Valette, Alain},
   title={Kazhdan's property (T)},
   series={New Mathematical Monographs},
   volume={11},
   publisher={Cambridge University Press, Cambridge},
   date={2008},
   pages={xiv+472},
   isbn={978-0-521-88720-5},
   review={\MR{2415834}},
   doi={10.1017/CBO9780511542749},
}
\bib{caprace-kassabov:t}{article}{
    author={Caprace, Pierre-Emmanuel},
    author={Kassabov, Martin},
    title={Tame automorphism groups of polynomial rings with property (T) and infinitely many alternating group quotients},
    journal={Trans. Mem. Amer. Math. Soc. (to appear)},
    doi={10.1090/tran/8988},
    eprint={arXiv:2210.00730},
}

\bib{ershov-jaikin:universal}{article}{
   author={Ershov, Mikhail},
   author={Jaikin-Zapirain, Andrei},
   title={Property (T) for noncommutative universal lattices},
   journal={Invent. Math.},
   volume={179},
   date={2010},
   number={2},
   pages={303--347},
   issn={0020-9910},
   review={\MR{2570119}},
   doi={10.1007/s00222-009-0218-2},
}

\bib{ershov-jaikin-kassabov:t}{article}{
    author={Ershov, Mikhail},
    author={Jaikin-Zapirain, Andrei},
    author={Kassabov, Martin},
    title={Property $(T)$ for groups graded by root systems},
    journal={Mem. Amer. Math. Soc.},
    volume={249},
    date={2017},
    number={1186},
    pages={v+135},
    issn={0065-9266},
    isbn={978-1-4704-2604-0},
    isbn={978-1-4704-4139-5},
    review={\MR{3724373}},
    doi={10.1090/memo/1186},
}

\bib{hall:eulerian}{article}{
 Author = {Hall, Philip},
 Title = {The {Eulerian} functions of a group},
 Journal = {Q. J. Math., Oxf. Ser.},
 Volume = {7},
 Pages = {134--151},
 Year = {1936},
 DOI = {10.1093/qmath/os-7.1.134},
}
\bib{kno:autf5}{article}{
   author={Kaluba, Marek},
   author={Nowak, Piotr W.},
   author={Ozawa, Narutaka},
   title={${\rm Aut}(\mathbb F_5)$ has property $(T)$},
   journal={Math. Ann.},
   volume={375},
   date={2019},
   number={3-4},
   pages={1169--1191},
   issn={0025-5831},
   review={\MR{4023374}},
   doi={10.1007/s00208-019-01874-9},
}
\bib{kazhdan:T}{article}{
    author={Ka{\v {z}}dan, David~A.},
    title={On the connection of the dual space of a group with the structure of its closed subgroups},
    language={Russian},
    journal={Funkcional. Anal. i Prilo\v zen.},
    volume={1},
    date={1967},
    pages={71\ndash 74},
    issn={0374-1990},
    review={\MR {0209390 (35 \#288)}},
}
\bib{kassabov:alt}{article}{
   author={Kassabov, Martin},
   title={Symmetric groups and expander graphs},
   journal={Invent. Math.},
   volume={170},
   date={2007},
   number={2},
   pages={327--354},
   issn={0020-9910},
   review={\MR{2342639}},
   doi={10.1007/s00222-007-0065-y},
}
\bib{kassabov:t}{article}{
    author={Kassabov, Martin},
    title={Subspace arrangements and property T},
    journal={Groups Geom. Dyn.},
    volume={5},
    date={2011},
    number={2},
    pages={445--477},
    issn={1661-7207},
    review={\MR{2782180}},
    doi={10.4171/GGD/134},
}

\bib{kassabov-nikolov:tau}{article}{
    author={Kassabov, Martin},
    author={Nikolov, Nikolay},
    title={Property tau is not a profinite property},
    year={2023},
    note={preprint},
}
\bib{lubotzky:discrete}{book}{
   author={Lubotzky, Alexander},
   title={Discrete groups, expanding graphs and invariant measures},
   series={Progress in Mathematics},
   volume={125},
   note={With an appendix by Jonathan D. Rogawski},
   publisher={Birkh\"{a}user Verlag, Basel},
   date={1994},
   pages={xii+195},
   isbn={3-7643-5075-X},
   review={\MR{1308046}},
   doi={10.1007/978-3-0346-0332-4},
}
\bib{margulis:expander}{article}{
   author={Margulis, G. A.},
   title={Explicit constructions of expanders},
   language={Russian},
   journal={Problemy Pereda\v{c}i Informacii},
   date={1973},
   number={no. 4,},
   pages={71--80},
   issn={0555-2923},
   review={\MR{0484767}},
}
\end{biblist}
\end{bibsection}

\end{document}